\newcommand{\bburl}[1]{\textcolor{blue}{\url{#1}}}
\newcommand{\monthyear}[1]{%
  \def\@monthyear{\uppercase{#1}}}
\newcommand{\volnumber}[1]{%
  \def\@volnumber{\uppercase{#1}}}
\theoremstyle{plain}
\numberwithin{equation}{section} 
\newtheorem{thm}{Theorem}[section] 
\newtheorem{theorem}[thm]{Theorem}
\newtheorem{lemma}[thm]{Lemma}
\newtheorem{definition}[thm]{Definition}
\newtheorem{corollary}[thm]{Corollary}
\newtheorem{conjecture}[thm]{Conjecture}
\numberwithin{table}{section} 
\numberwithin{figure}{section}
\begin{document}

\monthyear{Month Year}
\volnumber{Volume, Number}
\setcounter{page}{1}

\title{Iteration Steps of 3x+1 Problem}

\author{
\name{Youchun Luo\textsuperscript{a} \thanks{Email address: math.youchunluo@gmail.com; yc-luo24@mails.tsinghua.edu.cn}}
\affil{\textsuperscript{a}	Tsinghua Shenzhen International Graduate School,
	 						Tsinghua University,
	 						Shenzhen, Guangdong Province, China}
}
\maketitle

{\bf Article type}: research 
\bigskip

\begin{abstract}
	On the 3x+1 problem, given a positive integer $N$, let $D\left( N \right) $, $O\left( N \right) $ and $E\left( N \right) $ denote the total number of iteration steps, the number of odd iteration steps, and the number of even iteration steps, respectively, when $N$ is iterated until it reaches 1. It is straightforward to observe that $D\left( N \right) =O\left( N \right) +E\left( N \right) $. In this paper, we propose a conjecture termed the Weak Residue Conjecture(i.e., $\frac{2^{E\left( N \right)}}{3^{O\left( N \right)}\cdot N}<2$). We prove that if the 3x+1 conjecture is true and the Weak Residue Conjecture is true, there exist nontrivial relationships among $D\left( N \right) $, $O\left( N \right) $, $E\left( N \right) $, i.e., $O\left( N \right) =\lfloor \log _62\cdot D\left( N \right) -\log _6N \rfloor $(this implies that, given $N$, both $O\left( N \right) $ and $E\left( N \right) $ can be directly computed from $D\left( N \right) $), and five more similar equations are derived simultaneously. Similarly, the case of qx+1 problem is studied too.
\end{abstract}

\begin{keywords}
	Collatz Conjecture; Residue; The total number of iteration steps; The number of odd iteration steps; The number of even iteration steps; qx+1 problem
\end{keywords}

\section{Introduction}

The 3x+1 problem is most simply stated in terms of the Collatz function $C\left( N \right) $ \cite{Lagarias1985,Lagarias2012}:
$$	C(N)=\left\{ \begin{array}{c}
	3N+1,\mathrm{   if }N\equiv 1\left( \mathrm{mod}~2 \right)\\\\
	\frac{N}{2},\mathrm{   if }N\equiv 0\left( \mathrm{mod}~2 \right)\\
	\end{array} \right. .$$

The 3x+1 problem (or Collatz conjecture) is to prove that starting from any positive integer, some iterate of this function takes the value 1. Numerical computations have verified the Collatz conjecture for all $N\le 2.36\times 10^{21}$ \cite{Roosendaal2025}.
For any positive integer $N$, let $D\left( N \right) $, $O\left( N \right) $ and $E\left( N \right) $ denote the total number of iteration steps, the number of odd iteration steps, and the number of even iteration steps, respectively, when $N$ is iterated until it reaches 1. The number of even iteration steps $E\left( N \right) $ is equal to the total stopping time $\sigma _{\infty}\left( N \right) $ \cite{Leavens1992}:
$$E\left( N \right) =\sigma _{\infty}\left( N \right) .$$
The total stopping time $\sigma _{\infty}\left( N \right) $ is the least whole number $k$ such that $T^{\left( k \right)}\left( N \right) =1$, where $T(N)=\frac{3N+1}{2}$ if $N\equiv 1\left( \mathrm{mod}~2 \right) $, $T(N)=\frac{N}{2}$ if $N\equiv 0\left( \mathrm{mod}~2 \right) $. Relevant literature on the total stopping time $\sigma _{\infty}\left( N \right) $ is available in \cite{Andaloro2000,Ladue2018}.
It is evident that there exists a trivial relationship:
$$D\left( N \right) =O\left( N \right) +E\left( N \right) .$$
For example, the iteration of 7:
$$7\rightarrow 22\rightarrow 11\rightarrow 34\rightarrow 17\rightarrow 52\rightarrow 26\rightarrow 13\rightarrow 40\rightarrow 20\rightarrow 10\rightarrow 5\rightarrow 16\rightarrow 8\rightarrow 4\rightarrow 2\rightarrow 1 .$$
Thus, 
$$D\left( 7 \right) =16, O\left( 7 \right) =5, E\left( 7 \right) =11.$$

We establish a nontrivial relationship among $D\left( N \right) $, $O\left( N \right) $, $E\left( N \right) $ by introducing the concept of Residue. According to Eric Roosendaal’s definition \cite{Roosendaal2025}, the Residue of $N$:
$$Res(N)=\frac{2^{E(N)}}{3^{O(N)}\cdot N} .$$
The lower bound $Res\left( N \right) \ge 1$ was previously established in \cite{Andrei2000,Goodwin2015}. For the upper bound of Residue, Roosendaal proposed the strong residue conjecture: $Res(N)\le Res(993)$, where $Res\left( 993 \right) =1.253142144...$. Roosendaal has not identified any number up to $3\times 10^{19}$ with a higher Residue either. These observations suggest that the strong residue conjecture is likely to hold. Moreover, Roosendaal also proposed the Weak Residue Conjecture: A number $Res_{\max}$ exists such that, for every positive integer, $Res\left( N \right) <Res_{\max} $. As a special case of Roosendaal's Weak Residue Conjecture, we propose the eponymous conjecture:
\begin{conjecture}[Weak Residue Conjecture (WRC)] \label{conj:WRC}
	We have
	$$Res(N)<2 .$$
\end{conjecture}

In the third section, we obtain an upper bound of $Res\left( N \right) $: when $O\left( N \right) \ge 20$, it holds that
$$Res\left( N \right) <O\left( N \right) ^{\small{\frac{1}{9}}} .$$
As a corollary, when $O\left( N \right) \le 512$, the \textbf{WRC} is true.

In the fourth section, for the main purpose of this paper, we establish the nontrivial relationship among $D\left( N \right) $, $O\left( N \right) $, $E\left( N \right) $. Assuming both the 3x+1 conjecture and the \textbf{WRC} hold, we prove that,
	$$\begin{aligned}
	O\left( N \right) &=\lfloor \log _62\cdot D\left( N \right) -\log _6N \rfloor ,  \\
	E\left( N \right) &=\lceil \log _63\cdot D\left( N \right) +\log _6N \rceil , \\
	D\left( N \right) &=\lceil \log _26\cdot O\left( N \right) +\log _2N \rceil , \\
	E\left( N \right) &=\lceil \log _23\cdot O\left( N \right) +\log _2N \rceil , \\
	D\left( N \right) &=\lfloor \log _36\cdot E\left( N \right) -\log _3N \rfloor ,  \\
	O\left( N \right) &=\lfloor \log _32\cdot E\left( N \right) -\log _3N \rfloor  .
	\end{aligned}$$
The formulas above depict nontrivial relationships among $D\left( N \right) $, $O\left( N \right) $, $E\left( N \right) $. For example, when $N=7$, we know $D\left( 7 \right) =16$. Using these formulas, we can directly compute $O\left( 7 \right) ,E\left( 7 \right) $:
	$$\begin{aligned}
	O\left( 7 \right) &=\lfloor \log _62\cdot D\left( 7 \right) -\log _67 \rfloor =\lfloor 5.10361 \rfloor =5, \\
	E\left( 7 \right) &=\lceil \log _63\cdot D\left( 7 \right) +\log _67 \rceil =\lceil 10.8964 \rceil =11.
	\end{aligned}$$

In the fifth section, for qx+1 problem, we extend the definition of residue to $res_q\left( N,N\prime \right) $, and derive more general results concerning this function.

	\section{Proof of $Res\left( N \right) \ge 1$}

\begin{definition}[Syracuse map]
	According to Terence Tao's definition \cite{Tao2022}, given a positive integer $N$, the Collatz iteration process from $N$ to 1 generates the Collatz map of $N$. All odd integers (except 1) in the Collatz map are referred to as the Syracuse map of $N$, that is
	$$N_1\rightarrow \cdots \rightarrow N_j\rightarrow \cdots \rightarrow N_i ,$$
	where $N_i\ne 1$, the number of elements in Syracuse map of $N$ equals to the number of odd iteration steps $O\left( N \right) $.
\end{definition}

To facilitate analytical study of the Residue, we introduce an alternative expression for $Res\left( N \right)$.
\begin{lemma}[Product Lemma of Residue]\label{lem:PLR}
	Assuming the 3x+1 conjecture holds, and given a positive integer $N$, the Residue of $N$ can be defined as:	
	$$Res\left( N \right) =\prod_{j=1}^i{\left( 1+\frac{1}{3N_j} \right)} ,$$
	where $N_j$ is the $j$th element of Syracuse map of $N$, and the number of odd iteration steps equal to $i$.
\end{lemma}
In fact, this formula also appears on Roosendaal's web page \cite{Roosendaal2025}.

\begin{proof}	
	\textbf{Case 1}: If $N$ is even, then the Syracuse map of $N$: $\left( N \right) \overset{m_0}{\rightarrow}N_1\rightarrow N_2\rightarrow \cdots \rightarrow N_i\rightarrow \left( 1 \right) $, where $m_0$ denotes the number of even iteration steps from $N$ to $N_1$. Then, the Residue of $N$: $Res\left( N \right) =\frac{2^{E(N)}}{3^{O(N)}\cdot N}$. Since $N_1=\frac{N}{2^{m_0}}$, the Residue of the first odd integer $N_1$:$Res\left( N_1 \right) =\frac{2^{E(N_1)}}{3^{O(N_1)}\cdot N_1}=\frac{2^{E(N)-m_0}}{3^{O(N)}\cdot \frac{N}{2^{m_0}}}=Res\left( N \right) $.
	Thus, it suffices to consider the case where $N$ is odd.
	
	\textbf{Case 2}: If $N$ is odd, then the Syracuse map of $N$: $N=N_1\overset{m_1}{\rightarrow}N_2\rightarrow \cdots \rightarrow N_i\overset{m_i}{\rightarrow}\left( 1 \right) $, where $m_1$ denotes the number of even iteration steps from $N_1$ to $N_2$, $m_i$ denotes the number of even iteration steps from $N_i$ to $1$. The Residue of $N_1$ is: $Res\left( N_1 \right) =\frac{2^{E(N_1)}}{3^{O(N_1)}\cdot N_1}$. Since $N_2=\frac{3N_1+1}{2^{m_1}}$, the Residue of the second odd integer $N_2$ is:
	$$Res\left( N_2 \right) =\frac{2^{E(N_2)}}{3^{O(N_2)}\cdot N_2}=\frac{2^{E(N_1)-m_1}}{3^{O(N_1)-1}\cdot \frac{3N_1+1}{2^{m_1}}}=\frac{Res\left( N_1 \right)}{1+\frac{1}{3N_1}}.$$
	By iterating this relation up to the final odd integer $N_i$, we have
	$$Res\left( N_i \right) =\frac{Res\left( N_1 \right)}{\left( 1+\frac{1}{3N_1} \right) \left( 1+\frac{1}{3N_2} \right) \cdots \left( 1+\frac{1}{3N_{i-1}} \right)}.$$
	Since $1=\frac{3N_i+1}{2^{m_i}}$, it follows that
	$$Res\left( N_i \right) =\frac{2^{m_i}}{3^1\cdot N_i}=\frac{3N_i+1}{3N_i}=1+\frac{1}{3N_i}.$$
	Therefore, the Residue of $N$ satisfies
	\begin{equation*}
		Res\left( N \right) =\prod_{j=1}^i{\left( 1+\frac{1}{3N_j} \right)}.    \unskip\hfill\qedhere
	\end{equation*}
\end{proof}

As a direct consequence of Lemma \ref{lem:PLR}, we have:
\begin{theorem}[Lower Bound of Residue]\label{thm:LBR}
	Assuming the 3x+1 conjecture holds, we have
	$$Res\left( N \right) \ge 1 .$$
\end{theorem}

\begin{proof}
	According to Lemma \ref{lem:PLR}, it always holds that $Res\left( N \right) \ge 1$.
\end{proof}

\section{An upper bound of $Res\left( N \right) $}

To our knowledge, no prior work has established an upper bound for $Res\left( N \right) $. In this section, we present such a bound.

\begin{lemma}\label{lem:mod3}
	Given a positive integer $N$ and its Syracuse map $\left\{ N_j|1\le j\le i \right\} $, for all $j\ge 2$, $N_j$ is not divisible by 3.
\end{lemma}

\begin{proof}
	For any $k$ with $2\le k\le i$ and corresponding $N_k$, suppose $N_k$ is divisible by 3. Write $N_k=3·\left( 2n+1 \right) =6n+3$. Let $N_{k-1}$ be the predecessor of $N_k$ (i.e., $N_{k-1}\rightarrow N_k$). By the 3x+1 rule, there exists $m\in N^*$ such that $N_k=\frac{3N_{k-1}+1}{2^m}$. Rearranging gives
	$$N_{k-1}=\frac{2^mN_k}{3}-\frac{1}{3}=2^m\left( 2n+1 \right) -\frac{1}{3} .$$
	Clearly, $N_{k-1}$ is not an integer, which yields a contradiction.
\end{proof}

\begin{lemma}\label{lem:sum_Nj}
	If $\sum_{j=1}^i{\frac{1}{N_j}}\le a$, then $Res\left( N \right) <e^{\frac{a}{3}}$.
\end{lemma}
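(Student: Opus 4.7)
The plan is to reduce the claim to the odd case and then apply the elementary inequality $1+x < e^x$ (valid for every $x>0$) termwise to the product representation from Lemma~1.

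First I would handle the parity reduction. If $N$ is a power of $2$, the $N$-odd-branch is empty and $Res(N)=1$, so the inequality is immediate (assuming $a\ge 0$). If $N$ is even but not a power of $2$, write $N=2^s\cdot N_{\text{odd}}$; the computation in the proof of Theorem~1 gives $Res(N)=Res(N_{\text{odd}})$, so it suffices to prove the bound for the odd case. Thus I may assume $N=N_1$ is odd, and the $N$-odd-branch is $\{N_j\mid 1\le j\le i\}$ with $N_i$ the last odd integer before reaching $1$.

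Next, by Lemma~1,
\[
Res(N) \;=\; \prod_{j=1}^{i}\left(1+\frac{1}{3N_j}\right).
\]
Since each $N_j$ is a positive integer, $x_j:=1/(3N_j)>0$, so the strict inequality $1+x_j < e^{x_j}$ applies to every factor. Taking the product,
\[
Res(N) \;<\; \prod_{j=1}^{i} e^{1/(3N_j)} \;=\; \exp\!\left(\frac{1}{3}\sum_{j=1}^{i}\frac{1}{N_j}\right) \;\le\; e^{a/3},
\]
where the last step uses the hypothesis $\sum_{j=1}^{i} 1/N_j \le a$. The strictness of the first inequality survives the product (and only one positive factor is needed), giving $Res(N)<e^{a/3}$.

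There is no real obstacle here; the argument is essentially the standard inequality $1+x<e^x$ applied to Lemma~1, with only minor bookkeeping to cover the even cases via the reduction $Res(N)=Res(N_{\text{odd}})$. The one thing worth being explicit about is that the strict inequality holds because $i\ge 1$ whenever $N\neq 1$ (so at least one term $1+1/(3N_j)$ appears), which matches the contexts in which the lemma will be invoked.
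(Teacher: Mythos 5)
Your proof is correct and takes essentially the same approach as the paper: the paper applies $\ln(1+x)<x$ after taking logarithms of the product from Lemma~1, which is the same inequality as your termwise $1+x<e^x$. Your extra bookkeeping reducing the even cases to $Res(N)=Res(N_{\text{odd}})$ is a harmless (and slightly more careful) addition that the paper leaves implicit.
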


\begin{proof}
	By Lemma \ref{lem:PLR}, we have
	$$Res\left( N \right) =\prod_{j=1}^i{\left( 1+\frac{1}{3N_j} \right)} .$$
	Using the logarithmic function,
	$$\ln Res\left( N \right) =\ln \prod_{j=1}^i{\left( 1+\frac{1}{3N_j} \right)}=\sum_{j=1}^i{\ln \left( 1+\frac{1}{3N_j} \right)}<\frac{1}{3}\sum_{j=1}^i{\frac{1}{N_j}} .$$
	Since $\sum_{j=1}^i{\frac{1}{N_j}}\le a$, we have $\ln Res\left( N \right) <\frac{a}{3}$. That is, $Res\left( N \right) <e^{\frac{a}{3}}$.
\end{proof}

\begin{lemma}\label{lem:euler}
	For all $n\in N^*$, we have
	$$\ln n+\gamma <\sum_{k=1}^n{\frac{1}{k}}<\ln n+\gamma +\frac{1}{2n} ,$$
	where $\gamma$ is the Euler constant.
\end{lemma}

\begin{proof}
	Since we have
	$$\sum_{k=1}^n{\frac{1}{k}}=\ln n+\gamma +\frac{1}{2n}+\sum_{k=1}^m{\left( -1 \right) ^k\frac{B_k}{2k\cdot n^{2k}}}+\left( -1 \right) ^{m+1}\theta \frac{B_{m+1}}{\left( 2m+2 \right) \cdot n^{2m+2}} ,$$
	where $\theta \in \left( 0,1 \right) $, $B_k$ denotes the $k$th Bernoulli number. The lemma is thus proved.
\end{proof}

For any $m\in N^*$, let $S_1\left( m \right) =\left\{ k|k\ge 5,k\,\,is\,\,odd,k \not\equiv 0\left( mod\,\,3 \right) ,k\le 6m+3 \right\} $, $S_2\left( m \right) =\left\{ k|k\ge 5,k\,\,is\,\,odd,k \not\equiv 0\left( mod\,\,3 \right) ,k\le 6m+5 \right\} $.
\begin{lemma}\label{lem:harmonic}
	We have
	$$\sum_{k\in S_1\left( m \right)}{\frac{1}{k}}<\ln \left( 3^{\frac{1}{2}}2^{\frac{2}{3}} \right) +\frac{\gamma}{3}-1+\frac{1}{3}\ln m+\frac{5}{6m} .$$
\end{lemma}

\begin{proof}
	For any $m\in N^*$, by Lemma \ref{lem:euler}, we obtain
	$$\sum_{k=1}^{6m+3}{\frac{1}{k}}<\ln\mathrm{(}6m+3)+\gamma +\frac{1}{2(6m+3)},$$
	$$\sum_{2\le k,k\,\,is\,\,even}^{6m+2}{\frac{1}{k}}=\frac{1}{2}\sum_{k=1}^{3m+1}{\frac{1}{k}}>\frac{1}{2}\ln\mathrm{(}3m+1)+\frac{\gamma}{2}.$$
	Thus,
	$$\sum_{1\le k,k\,\,is\,\,odd}^{6m+3}{\frac{1}{k}}=\sum_{k=1}^{6m+3}{\frac{1}{k}}-\sum_{2\le k,k\,\,is\,\,even}^{6m+2}{\frac{1}{k}}<\ln\mathrm{(}\frac{6m+3}{\sqrt{3m+1}})+\frac{\gamma}{2}+\frac{1}{2(6m+3)} .$$
	Similarly, by Lemma \ref{lem:euler}, we obtain that
	$$\sum_{1\le k,k\,\,is\,\,odd}^{2m+1}{\frac{1}{k}}=\sum_{k=1}^{2m+1}{\frac{1}{k}}-\sum_{2\le k,k\,\,is\,\,even}^{2m}{\frac{1}{k}}>\ln\mathrm{(}2m+1)+\gamma -\frac{1}{2}\ln m-\frac{1}{2}\gamma -\frac{1}{4m} .$$
	We now state the following result:
	\allowdisplaybreaks
	\begin{align*}
		\sum_{k\in S_1\left( m \right)}{\frac{1}{k}}&=\sum_{1\le k,k\,\,is\,\,odd}^{6m+3}{\frac{1}{k}}-\frac{1}{3}\sum_{1\le k,k\,\,is\,\,odd}^{2m+1}{\frac{1}{k}}-1 \\
		&<\ln \left( \frac{\left( 6m+3 \right) m^{\small{\frac{1}{6}}}}{\sqrt{3m+1}\left( 2m+1 \right) ^{\small{\frac{1}{3}}}} \right) +\frac{\gamma}{3}-1+\frac{1}{12m}+\frac{1}{2\left( 6m+3 \right)}\\
		&<\ln \left( \frac{3^{\frac{1}{2}}2^{\frac{2}{3}}(m+1)^{\frac{2}{3}}}{m^{\frac{1}{3}}} \right) +\frac{\gamma}{3}-1+\frac{1}{12m}+\frac{1}{2(6m+3)} \\
		&=\ln\mathrm{(}3^{\frac{1}{2}}2^{\frac{2}{3}})+\frac{1}{3}\ln \left( m(\frac{m+1}{m})^2 \right) +\frac{\gamma}{3}-1+\frac{1}{12m}+\frac{1}{2(6m+3)} \\
		&=\ln\mathrm{(}3^{\frac{1}{2}}2^{\frac{2}{3}})+\frac{1}{3}\ln m+\frac{2}{3}\ln\mathrm{(}\frac{m+1}{m})+\frac{\gamma}{3}-1+\frac{1}{12m}+\frac{1}{2(6m+3)} \\
		&<\ln\mathrm{(}3^{\frac{1}{2}}2^{\frac{2}{3}})+\frac{\gamma}{3}-1+\frac{1}{3}\ln m+\frac{2}{3m}+\frac{1}{12m}+\frac{1}{2(6m+3)} \\
		&<\ln\mathrm{(}3^{\frac{1}{2}}2^{\frac{2}{3}})+\frac{\gamma}{3}-1+\frac{1}{3}\ln m+\frac{5}{6m} .  \unskip\hfill\qedhere
	\end{align*}
\end{proof}

\begin{theorem}[Upper Bound Estimate of Residue] \label{thm:UBR}
	Assuming the 3x+1 conjecture holds, if $O\left( N \right) \ge 20$, then
	$$Res\left( N \right) <O\left( N \right) ^{\frac{1}{9}} .$$
\end{theorem}

\begin{proof}
	Consider the sum $\sum_{k\in S_1\left( m \right)}{\frac{1}{k}}$ and $\sum_{k\in S_2\left( m \right)}{\frac{1}{k}}$. 
	The sum $\sum_{k\in S_1\left( m \right)}{\frac{1}{k}}$ includes the smallest $2m$ odd integers that are not divisible by 3. Similarly, $\sum_{k\in S_2\left( m \right)}{\frac{1}{k}}$ includes the smallest $2m+1$ odd integers not divisible by 3.
	
	\textbf{Case 1}: There are $2m$ terms in $\sum_{k\in S_1\left( m \right)}{\frac{1}{k}}$, so $O\left( N \right) =i=2m$. By Lemma \ref{lem:mod3}, two cases arise:
	
	(1) All $N_j$ are not divisible by 3. Since $\sum_{k\in S_1\left( m \right)}{\frac{1}{k}}$ includes the smallest $2m$ odd integers, we obtain
	$$\sum_{j=1}^{2m}{\frac{1}{N_j}}\le \sum_{k\in S_1\left( m \right)}{\frac{1}{k}} .$$
	(2) $N_1\equiv 0\left( mod\,\,3 \right) $, then its possible values include 3,9,15,21,27,33,39,.... If $N_1\ge 39$, then we obtain
	$$\sum_{j=1}^{2m}{\frac{1}{N_j}}\le \sum_{k\in S_1\left( m \right)}{\frac{1}{k}}+\frac{1}{39} .$$
	By Lemma \ref{lem:harmonic}, if $N_1\notin \left\{ 3,9,15,21,27,33 \right\} $, then
	$$\sum_{j=1}^{2m}{\frac{1}{N_j}}\le \sum_{k\in S_1\left( m \right)}{\frac{1}{k}}+\frac{1}{39}<\ln\mathrm{(}3^{\frac{1}{2}}2^{\frac{2}{3}})+\frac{\gamma}{3}-\frac{38}{39}+\frac{1}{3}\ln m+\frac{5}{6m} .$$
	By Lemma \ref{lem:sum_Nj}, we obtain
	\begin{align*}
		Res\left( N \right) &<e^{\ln\mathrm{(}3^{\frac{1}{6}}2^{\frac{2}{9}})+\frac{\gamma}{9}-\frac{38}{117}+\frac{1}{9}\ln m+\frac{5}{18m}} \\
		&=3^{\frac{1}{6}}2^{\frac{2}{9}}e^{\frac{\gamma}{9}-\frac{38}{117}}m^{\frac{1}{9}}e^{\frac{5}{18m}} \\
		&=\underset{0.999467... }{\underbrace{3^{\frac{1}{6}}2^{\frac{1}{9}}e^{\frac{\gamma}{9}-\frac{38}{117}}}}e^{\frac{5}{9O(N)}}\,\,O(N)^{\frac{1}{9}} .
	\end{align*}
	If $\,\,O(N)\ge 1043$, then
	$$0.999467<3^{\frac{1}{6}}2^{\frac{1}{9}}e^{\frac{\gamma}{9}-\frac{38}{117}}e^{\frac{5}{9O(N)}}<0.99999981 .$$
	Therefore, if $\,\,O(N)\ge 1043$ and $O\left( N \right) $ is even, $N_1\notin \left\{ 3,9,15,21,27,33 \right\} $, then we obtain
	$$Res\left( N \right) <O\left( N \right) ^{\frac{1}{9}} .$$
	
	\textbf{Case 2}: By Lemma \ref{lem:harmonic}, we obtain
	\begin{align*}
		\sum_{k\in S_2\left( m \right)}{\frac{1}{k}}&<\ln\mathrm{(}3^{\frac{1}{2}}2^{\frac{2}{3}})+\frac{\gamma}{3}-1+\frac{1}{3}\ln m+\frac{5}{6m}+\frac{1}{6m+5} \\
		&<\ln\mathrm{(}3^{\frac{1}{2}}2^{\frac{2}{3}})+\frac{\gamma}{3}-1+\frac{1}{3}\ln m+\frac{1}{m} .
	\end{align*}
	There are $2m+1$ terms in $\sum_{k\in S_2\left( m \right)}{\frac{1}{k}}$, so $O\left( N \right) =i=2m+1$. By Lemma \ref{lem:mod3}, two cases arise:
	
	(1) All $N_j$ are not divisible by 3. Since $\sum_{k\in S_2\left( m \right)}{\frac{1}{k}}$ includes the smallest $2m+1$ odd integers not divisible by 3, we obtain
	$$\sum_{j=1}^{2m+1}{\frac{1}{N_j}}\le \sum_{k\in S_2\left( m \right)}{\frac{1}{k}} .$$
	(2) $N_1\equiv 0\left( mod\,\,3 \right) $, then its possible values include 3,9,15,21,27,33,39,.... If $N_1\ge 39$, then we obtain
	$$\sum_{j=1}^{2m+1}{\frac{1}{N_j}}\le \sum_{k\in S_2\left( m \right)}{\frac{1}{k}}+\frac{1}{39} .$$
	Therefore, if $N_1\notin \left\{ 3,9,15,21,27,33 \right\} $, then
	$$\sum_{j=1}^{2m+1}{\frac{1}{N_j}\le \sum_{k\in S_2\left( m \right)}{\frac{1}{k}}}+\frac{1}{39}<\ln\mathrm{(}3^{\frac{1}{2}}2^{\frac{2}{3}})+\frac{\gamma}{3}-\frac{38}{39}+\frac{1}{3}\ln m+\frac{1}{m} .$$
	
	By Lemma \ref{lem:sum_Nj}, we obtain
	\begin{align*}
		Res(N)&<e^{\ln\mathrm{(}3^{\frac{1}{6}}2^{\frac{2}{9}})+\frac{\gamma}{9}-\frac{38}{117}+\frac{1}{9}\ln m+\frac{1}{3m}} \\
		&=3^{\frac{1}{6}}2^{\frac{2}{9}}e^{\frac{\gamma}{9}-\frac{38}{117}}m^{\frac{1}{9}}e^{\frac{1}{3m}} \\
		&=3^{\frac{1}{6}}2^{\frac{1}{9}}e^{\frac{\gamma}{9}-\frac{38}{117}}e^{\frac{2}{3(O(N)-1)}}\left( O(N)-1 \right) ^{\frac{1}{9}} \\
		&<\underset{0.999467... }{\underbrace{3^{\frac{1}{6}}2^{\frac{1}{9}}e^{\frac{\gamma}{9}-\frac{38}{117}}}}e^{\frac{2}{3(O(N)-1)}}O(N)^{\frac{1}{9}} .
	\end{align*}
	If $O\left( N \right) \ge 1253$, then
	$$0.999467<3^{\frac{1}{6}}2^{\frac{1}{9}}e^{\frac{\gamma}{9}-\frac{38}{117}}e^{\frac{2}{3(O(N)-1)}}<0.99999964 .$$
	Therefore, if $O\left( N \right) \ge 1253$ and $O\left( N \right) $ is odd, and $N_1\notin \left\{ 3,9,15,21,27,33 \right\} $, then we obtain
	$$Res\left( N \right) <O\left( N \right) ^{\frac{1}{9}} .$$
	Combining Case 1 and Case 2, we conclude that if $O\left( N \right) \ge 1253$, $N_1\notin \left\{ 3,9,15,21,27,33 \right\} $, then
	$$Res\left( N \right) <O\left( N \right) ^{\frac{1}{9}} .$$
	
	Via simple computer programming, we can verify that when $20\le O\left( N \right) \le 1252$, $N_1\notin \left\{ 3,9,15,21,27,33 \right\} $, the result holds
	$$\left( 1+\frac{1}{3\times 39} \right) \cdot 
	\prod_{
		\scriptstyle 5\le j,j\,\,is\,\,odd \atop \scriptstyle j \not\equiv 0\left( mod\,\,3 \right)
	}^{\scriptstyle O(N)~terms}{\left( 1+\frac{1}{3j}\right)}<O\left( N \right) ^{\frac{1}{9}} .$$
	It follows that
	$$Res\left( N \right) =\prod_{j=1}^{O(N)}{\left( 1+\frac{1}{3N_j} \right)}\le \left( 1+\frac{1}{3\times 39} \right) \cdot \prod_{
		\scriptstyle 5\le j,j\,\,is\,\,odd \atop \scriptstyle j \not\equiv 0\left( mod\,\,3 \right)
	}^{\scriptstyle O(N)~terms}{\left( 1+\frac{1}{3j}\right)} .$$
	Therefore, if $O\left( N \right) \ge 20$, $N_1\notin \left\{ 3,9,15,21,27,33 \right\} $, the result holds:
	$$Res\left( N \right) <O\left( N \right) ^{\frac{1}{9}} .$$
	
	Now consider the case $O\left( N \right) \ge 20$, $N_1\in \left\{ 3,9,15,21,27,33 \right\} $. When $N_1=3,9,15,21,27,33$, for these values of $N_1$, the corresponding $O\left( N \right) =2,6,5,1,41,8$. Therefore, only $N_1=27$ satisfies both $O\left( N \right) \ge 20$ and $N_1\in \left\{ 3,9,15,21,27,33 \right\} $. In this case, when $N_1=27$,
	$$Res\left( 27 \right) =1.1988... <O\left( 27 \right) ^{\frac{1}{9}}=1.5107... .$$
	Finally, we conclude that if $O\left( N \right) \ge 20$, the result holds:
	\begin{equation*}
		Res\left( N \right) <O\left( N \right) ^{\frac{1}{9}} .   \unskip\hfill\qedhere
	\end{equation*}
\end{proof}

Although a complete proof of the \textbf{WRC} remains elusive, we can still establish the following result:
\begin{corollary}
	Assuming the 3x+1 conjecture holds, if $O\left( N \right) \le 512$, then the \textbf{WRC} is valid.
\end{corollary}

\begin{proof}
	By Theorem \ref{thm:UBR}, when $20\le O\left( N \right) \le 512$, we obtain
	$$Res\left( N \right) <O\left( N \right) ^{\frac{1}{9}}\le 512^{\frac{1}{9}}=2 .$$
	For any $O\left( N \right) \le 19$, there are two possible cases:
	
	(1) All $N_j$ are not divisible by 3. Then we have
	$$\sum_{j=1}^{O(N)}{\frac{1}{N_j}}\le \sum_{
		\scriptstyle 5\le k,k\,\,is\,\,odd \atop \scriptstyle k \not\equiv 0\left( mod\,\,3 \right)
	}^{\scriptstyle O(N)~terms}{\frac{1}{k}}\le \sum_{
		\scriptstyle 5\le k,k\,\,is\,\,odd \atop \scriptstyle k \not\equiv 0\left( mod\,\,3 \right)
	}^{19~terms}{\frac{1}{k}} .$$
	
	(2) $N_1\equiv 0\left( mod\,\,3 \right) $, then we have
	$$\sum_{j=1}^{O(N)}{\frac{1}{N_j}}\le \sum_{
		\scriptstyle 5\le k,k\,\,is\,\,odd \atop \scriptstyle k \not\equiv 0\left( mod\,\,3 \right)
	}^{\scriptstyle O(N)~terms}{\frac{1}{k}}+\frac{1}{3}\le \sum_{
		\scriptstyle 5\le k,k\,\,is\,\,odd \atop \scriptstyle k \not\equiv 0\left( mod\,\,3 \right)
	}^{19~terms}{\frac{1}{k}}+\frac{1}{3} .$$
	Combining these two cases, we obtain
	\begin{align*}
	\sum_{j=1}^{O(N)}{\frac{1}{N_j}}&\le \sum_{
		\scriptstyle 5\le k,k\,\,is\,\,odd \atop \scriptstyle k \not\equiv 0\left( mod\,\,3 \right)
		}^{19~terms}{\frac{1}{k}}+\frac{1}{3}
		\\
		&=\left( \begin{array}{c}
			\frac{1}{5}+\frac{1}{7}+\frac{1}{11}+\frac{1}{13}+\frac{1}{17}+\frac{1}{19}+\frac{1}{23}+\frac{1}{25}+\frac{1}{29}+\frac{1}{31}\\
			+\frac{1}{35}+\frac{1}{37}+\frac{1}{41}+\frac{1}{43}+\frac{1}{47}+\frac{1}{49}+\frac{1}{53}+\frac{1}{55}+\frac{1}{59}\\
		\end{array} \right) +\frac{1}{3}
		\\
		&=1.3046...  .
	\end{align*}
	By Lemma \ref{lem:sum_Nj}, we obtain
	\begin{equation*}
		Res\left( N \right) <e^{\frac{1.3047}{3}}<1.55 .   \unskip\hfill\qedhere
	\end{equation*}
\end{proof}

\section{Main Results}	
In this section, we explore the remarkable relationships among $D\left( N \right)$, $O\left( N \right)$, $E\left( N \right)$. Typically, $D\left( N \right)$, $O\left( N \right)$, $E\left( N \right)$ vary irregularly as $N$ increases. By analyzing the properties of the Residue, we derive six explicit equations.
\begin{theorem}[DOE Theorem]
	Given a positive integer $N$, assume the 3x+1 conjecture holds and the \textbf{WRC} is valid. Then, knowing any one of $D\left( N \right) , O\left( N \right) , E\left( N \right) $, the other two can be computed using the following formulas:
	\begin{align}
		O\left( N \right) &=\lfloor \log _62\cdot D\left( N \right) -\log _6N \rfloor  \label{eq1} ,\\
		E\left( N \right) &=\lceil \log _63\cdot D\left( N \right) +\log _6N \rceil \label{eq2} ,\\
		D\left( N \right) &=\lceil \log _26\cdot O\left( N \right) +\log _2N \rceil \label{eq3} ,\\
		E\left( N \right) &=\lceil \log _23\cdot O\left( N \right) +\log _2N \rceil \label{eq4} ,\\
		D\left( N \right) &=\lfloor \log _36\cdot E\left( N \right) -\log _3N \rfloor  \label{eq5} ,\\
		O\left( N \right) &=\lfloor \log _32\cdot E\left( N \right) -\log _3N \rfloor  \label{eq6} .
	\end{align}
\end{theorem}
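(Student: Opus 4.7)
The plan is to extract a single real parameter from $Res(N)$ and rearrange it in six different ways. Take the base-$2$ logarithm and set
\[
\alpha := \log_2 Res(N) = E(N) - O(N)\log_2 3 - \log_2 N.
\]
By Theorem~1, $Res(N) \ge 1$, so $\alpha \ge 0$; by the \textbf{WRC}, $Res(N) \le 2$, so $\alpha \le 1$. I would next upgrade the upper bound to a strict inequality. For $N$ a power of $2$, $Res(N)=1$ and $\alpha=0$, so assume otherwise; using $Res(N)=Res(N_{odd})$ I may assume $N$ itself is odd with $N>1$, and then Lemma~1 writes $Res(N)=\prod_{j=1}^{i}(3N_j+1)/(3N_j)$ with $i\ge 1$. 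If $Res(N)=2$, then $\prod(3N_j+1)=2\cdot 3^i\prod N_j$: the right side is divisible by $3$ since $i\ge 1$, but each factor on the left satisfies $3N_j+1\equiv 1\pmod 3$, so the product is $\equiv 1\pmod 3$. This contradiction shows $\alpha\in[0,1)$ unconditionally.

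With the master relation $E(N)=O(N)\log_2 3+\log_2 N+\alpha$ and $\alpha\in[0,1)$ in hand, I would combine it with $D(N)=O(N)+E(N)$ and rewrite in three choices of base. Base~$2$: adding $O(N)$ to the master relation gives $D(N)=O(N)\log_2 6+\log_2 N+\alpha$, so the right-hand sides of \eqref{eq3} and \eqref{eq4} lie in $(D(N)-1,D(N)]$ and $(E(N)-1,E(N)]$ respectively, and their ceilings recover $D(N)$ and $E(N)$. Base~$6$: solving for $O(N)$ yields $O(N)=D(N)\log_6 2-\log_6 N-\alpha\log_6 2$ and, by subtraction, $E(N)=D(N)\log_6 3+\log_6 N+\alpha\log_6 2$; the error $\alpha\log_6 2\in[0,\log_6 2)\subset[0,1)$ then produces \eqref{eq1} by $\lfloor\cdot\rfloor$ and \eqref{eq2} by $\lceil\cdot\rceil$. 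Base~$3$: solving the master relation for $O(N)$ gives $O(N)=E(N)\log_3 2-\log_3 N-\alpha\log_3 2$ and $D(N)=E(N)\log_3 6-\log_3 N-\alpha\log_3 2$; the error $\alpha\log_3 2\in[0,\log_3 2)\subset[0,1)$ yields \eqref{eq5} and \eqref{eq6} by $\lfloor\cdot\rfloor$.

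The only non-routine step, and therefore the main obstacle, is the strictness $\alpha<1$. The four floor identities \eqref{eq1}, \eqref{eq2}, \eqref{eq5}, \eqref{eq6} already work with only $\alpha\le 1$, because in each the coefficient multiplying $\alpha$ is one of $\log_6 2$ or $\log_3 2$, both $<1$, which keeps the error strictly inside a unit interval. By contrast, the two ceiling identities \eqref{eq3} and \eqref{eq4} multiply $\alpha$ by~$1$, so on the hypothetical boundary $Res(N)=2$ the right-hand side would equal $D(N)-1$ or $E(N)-1$ exactly, giving a ceiling that is off by one. The short mod-$3$ observation rules that boundary out; once it is in place, the six equations are all algebraic bookkeeping around the single identity $\alpha=E(N)-O(N)\log_2 3-\log_2 N$.
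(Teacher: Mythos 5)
Your proposal is correct, and its skeleton --- taking logarithms of $1\le Res(N)\le 2$ and locating each of $D(N), O(N), E(N)$ as the unique integer in an interval of length less than one --- is the same as the paper's. The genuine difference lies in the one delicate point, the boundary $Res(N)=2$, which would shift the two ceiling formulas $D(N)=\lceil \log_26\cdot O(N)+\log_2N\rceil$ and $E(N)=\lceil\log_23\cdot O(N)+\log_2N\rceil$ by one. The paper does not exclude $Res(N)=2$: it notes that for $N\ne 2^k$ the endpoint $\log_2(6^{O(N)}N)$ (resp.\ $\log_2(3^{O(N)}N)$) is not an integer, so the closed unit interval still contains exactly one integer, and it checks $N=2^k$ directly. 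You instead prove the cleaner statement that $Res(N)=2$ is impossible: after reducing to odd $N>1$ via $Res(N)=Res(N_{odd})$, Lemma~1 gives $\prod_j(3N_j+1)=2\cdot 3^i\prod_jN_j$, whose left side is $\equiv 1\pmod 3$ while the right side is $\equiv 0\pmod 3$ since $i\ge 1$. Both devices ultimately rest on the coprimality of $2$ and $3$, but yours organizes the whole theorem around a single parameter $\alpha=\log_2 Res(N)\in[0,1)$ and six rebasings of one identity, and it yields the small bonus that the \textbf{WRC} self-improves to the strict bound $Res(N)<2$. Two cosmetic slips only: equation \eqref{eq2} is a ceiling, not a floor, identity (though your accounting of which formulas tolerate $\alpha=1$ is nonetheless right, since its error coefficient is $\log_62<1$), and the mod-$3$ argument by itself shows $Res(N)\ne 2$ unconditionally --- the conclusion $\alpha<1$ still requires the \textbf{WRC} to supply $\alpha\le 1$ in the first place.
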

In fact, \eqref{eq3} is a conjecture proposed by Rafael Ruggiero (2019) \cite{Ruggiero2019}.

\begin{proof}
	By Theorem \ref{thm:LBR} and \textbf{WRC}, we always have $1\le Res\left( N \right) <2$. Applying the logarithmic function, we obtain $0\le \log _2\frac{2^{E(N)}}{3^{O(N)}\cdot N}<1$. That is,
	\begin{equation}
		0\le E(N)-\log _23\cdot O(N)-\log _2N<1. \label{eq7}
	\end{equation}
	Replacing $E$ with $D-O$ and $O$ with $D-E$, respectively, we obtain
	\begin{align}
		0&\le D(N)-\log _26\cdot O(N)-\log _2N<1, \label{eq8} \\
		0&\le \log _26\cdot E(N)-\log _23\cdot D(N)-\log _2N<1. \label{eq9}
	\end{align}
	By transforming \eqref{eq7}, \eqref{eq8} and \eqref{eq9}, respectively, six inequalities are derived,
	\begin{align}
		\log _23\cdot O\left( N \right) +\log _2N&\le E(N)<\log _23\cdot O\left( N \right) +\log _2N+1, \label{eq10}
		\\
		\frac{E(N)-\log _2N}{\log _23}-\frac{1}{\log _23}&<O(N)\le \frac{E(N)-\log _2N}{\log _23}, \label{eq11}
		\\
		\log _26\cdot O\left( N \right) +\log _2N&\le D(N)<\log _26\cdot O\left( N \right) +\log _2N+1, \label{eq12}
		\\
		\frac{D(N)-\log _2N}{\log _26}-\frac{1}{\log _26}&<O(N)\le \frac{D(N)-\log _2N}{\log _26}, \label{eq13}
		\\
		\frac{\log _23\cdot D(N)+\log _2N}{\log _26}&\le E(N)<\frac{\log _23\cdot D(N)+\log _2N}{\log _26}+\frac{1}{\log _26}, \label{eq14}
		\\
		\frac{\log _26\cdot E(N)-\log _2N}{\log _23}-\frac{1}{\log _23}&<D(N)\le \frac{\log _26\cdot E(N)-\log _2N}{\log _23}. \label{eq15}
	\end{align}
	Clearly, each of these intervals has length less than 1, and contains exactly one integer. Therefore, \eqref{eq10} implies \eqref{eq4}, \eqref{eq11} implies \eqref{eq6}, \eqref{eq12} implies \eqref{eq3}, \eqref{eq13} implies \eqref{eq1}, \eqref{eq14} implies \eqref{eq2}, and \eqref{eq15} implies \eqref{eq5}.
\end{proof}

	\section{Extension of $Res\left( N \right) $}
For qx+1 problem, where $q$ is an odd integer with $q\ge 3$, given an integer $N$, the q-Collatz function is defined as follows:
$$C_q(N)=\left\{ \begin{array}{c}
	qN+1,\mathrm{   if }N\equiv 1\left( \mathrm{mod}~2 \right)\\
	\\
	\frac{N}{2},\mathrm{   if }N\equiv 0\left( \mathrm{mod}~2 \right)\\
\end{array} \right. .$$

\begin{definition}
	If $N$ eventually iterates to 1, define the q-Residue as follows:
	$$Res_q\left( N \right) =\frac{2^{E\left( N \right)}}{q^{O\left( N \right)}\cdot N} .$$
\end{definition}

However, when $q>3$, the density of integers that can iterate to $1$ in the set of all positive integers is believed to equal to $0$ \cite{Kontorovich2009}. Thus, $Res_q\left( N \right) $ is limited in scope. It is therefore necessary to define residue for all positive integers, regardless of whether $N$ reaches 1. Moreover, we also consider the case where $N$ is a negative integer.

\begin{definition}
	For qx+1 problem, given a positive or negative integer $N$, let $N\prime $ be the result after $e$ even steps and $o$ odd steps, assuming all iteration values are non-repeating.	Define
	$$res_q\left( N,N\prime \right) =\frac{2^e}{q^o\cdot \small{\frac{N}{N\prime}}} .$$
\end{definition}
In particular, we have $Res_q\left( N \right) =res_q\left( N,1 \right) $.

\begin{lemma}[Product Lemma of residue]\label{lem:PLr}
	Given a positive or negative integer $N$, the following identity holds:
	$$res_q\left( N,N\prime \right) =\prod_{j=1}^o{\left( 1+\frac{1}{q\cdot N_j} \right)} ,$$
	where $N_j$ is the $j$th odd integer encountered during the iteration from $N$ to $N\prime $(except $N\prime $), and the number of odd iteration steps is $o$.
\end{lemma}

\begin{proof}
	\textbf{Case 1}: If $N$ is even, then the Syracuse map of $N$: $\left( N \right) \overset{m_0}{\rightarrow}N_1\rightarrow N_2\rightarrow \cdots \rightarrow N_o\rightarrow \left( N\prime \right) $, where $m_0$ denotes the number of even iteration steps from $N$ to $N_1$. Then, $res_q\left( N,N\prime \right) =\frac{2^e}{q^o\cdot \frac{N}{N\prime}}$. Since $N_1=\frac{N}{2^{m_0}}$, we obtain $res_q\left( N_1,N\prime \right) =\frac{2^{e-m_0}}{q^o\cdot \frac{N_1}{N\prime}}=res_q\left( N,N\prime \right) $.
	Therefore, it suffices to consider the case where $N$ is odd.
	
	\textbf{Case 2}: If $N$ is odd, then the Syracuse map of $N$: $N=N_1\overset{m_1}{\rightarrow}N_2\rightarrow \cdots \rightarrow N_o\overset{m_o}{\rightarrow}\left( N\prime \right) $, where $m_1$ denotes the number of even iteration steps from $N_1$ to $N_2$, $m_o$ denotes the number of even iteration steps from $N_o$ to $N\prime $. We have $res_q\left( N_1,N\prime \right) =\frac{2^e}{q^o\cdot \frac{N_1}{N\prime}}$. Since $N_2=\frac{qN_1+1}{2^{m_1}}$, we obtain
	$$res_q\left( N_2,N\prime \right) =\frac{2^{e-m_1}}{q^{o-1}\cdot \frac{N_2}{N\prime}}=\frac{2^e}{q^{o-1}\cdot \frac{qN_1+1}{N\prime}}=\frac{res_q\left( N_1,N\prime \right)}{1+\frac{1}{qN_1}} .$$
	By iterating this relation up to the final odd integer $N_o$, we obtain
	$$res_q\left( N_o,N\prime \right) =\frac{res_q\left( N_1,N\prime \right)}{\left( 1+\frac{1}{qN_1} \right) \left( 1+\frac{1}{qN_2} \right) \cdots \left( 1+\frac{1}{qN_{o-1}} \right)} .$$
	Since $N\prime =\frac{qN_o+1}{2^{m_o}}\left( m_o\ge 0 \right) $, it follows that
	$$res_q\left( N_o,N\prime \right) =\frac{2^{m_o}}{q^1\cdot \frac{N_o}{N\prime}}=1+\frac{1}{qN_o} .$$
	Finally, we obtain the following result:
	\begin{equation*}
		res_q\left( N,N\prime \right) =\prod_{j=1}^o{\left( 1+\frac{1}{q\cdot N_j} \right)} .  \unskip\hfill\qedhere
	\end{equation*}
\end{proof}

As a direct consequence of Lemma \ref{lem:PLr}, we have:
\begin{theorem}[Lower Bound of residue for Positive Integer] \label{thm:LBrPI}
	Given a positive integer $N$, we have
	$$res_q\left( N,N\prime \right) \ge 1 .$$
\end{theorem}
\begin{proof}
	By Lemma \ref{lem:PLr}, we always have $res_q\left( N,N\prime \right) \ge 1$.
\end{proof}

\begin{theorem}[Upper Bound of residue for Negative Integer] \label{thm:LBrNI}
	Given a negative integer $N$, we have
	$$res_q\left( N,N\prime \right) \le 1. $$
\end{theorem}
\begin{proof}
	By Lemma \ref{lem:PLr}, since all $N_j$ are negative odd integers, we always have $res_q\left( N,N\prime \right) \le 1$.
\end{proof}

An interesting topic is whether $res_q\left( N,N\prime \right) $ is bounded. Based on extensive numerical calculations, we propose the following:
\begin{conjecture}
	For any odd integer $q\ge 3$:
	
	{\normalfont(a)} If $N$ is positive, then $\exists R_q<+\infty $ such that $res_q\left( N,N\prime \right) \le R_q $ for all $N,N\prime $;
	
	{\normalfont(b)} If $N$ is negative, then $\exists r_q>0$ such that $r_q\le res_q\left( N,N\prime \right) $ for all $N,N\prime $.
\end{conjecture}

\begin{conjecture}[General Strong Residue Conjecture (GSRC)]
	We conjecture that:
	
	{\normalfont(a)} $R_q<1+\frac{2}{q} $;
	
	{\normalfont(b)} $1-\frac{2}{q}<r_q$.
\end{conjecture}

\begin{conjecture}[General Weak Residue Conjecture (GWRC)] \label{conj:GWRC}
	We conjecture that:
	
	{\normalfont(a)} $R_q<2 $;
	
	{\normalfont(b)} $\frac{1}{2}<r_q$.
\end{conjecture}

Estimated values of the first few $R_q$: $R_3\approx 1.33,R_5\approx 1.28,R_7\approx 1.14,R_9\approx 1.21$. 
Estimated values of the first few $r_q$: $r_3\approx 0.66,r_5\approx 0.8,r_7\approx 0.77,r_9\approx 0.88$.

The lower bound of $R_q$ is derived as follows.
\begin{theorem}[Lower Bound of $R_q$]
	We have
	$$R_q\ge 1+\frac{1}{q}.$$
\end{theorem}

\begin{proof}
	Let $N=1,N\prime =q+1$, the iteration process: $1\rightarrow q+1$ contains no repetition. By Lemma \ref{lem:PLr}, we obtain $R_q\ge res_q\left( 1,q+1 \right) =1+\frac{1}{q\cdot 1}=1+\frac{1}{q}$.
\end{proof}

The upper bound of $r_q$ is derived as follows.
\begin{theorem}[Upper Bound of $r_q$]
	We have
	$$r_q\le 1-\frac{1}{q}. $$
\end{theorem}

\begin{proof}
	Let $N=-1,N\prime =-q+1$, the iteration process: $-1\rightarrow -q+1$ contains no repetition. By Lemma \ref{lem:PLr}, we obtain $r_q\le res_q\left( -1,-q+1 \right) =1+\frac{1}{q\cdot \left( -1 \right)}=1-\frac{1}{q}$.
\end{proof}

\begin{theorem}[doe Theorem for Positive Integers]
	For qx+1 problem, given a positive integer $N$, let $d\left( N \right) $, $o\left( N \right) $ and $e\left( N \right) $ denote the total number of iteration steps, the number of odd iteration steps, and the number of even iteration steps, respectively, in a non-repeating process from $N$ to $N\prime $. If \textbf{GWRC.a} holds, then we have
	\begin{align}
		o&=\lfloor \log _{2q}2\cdot d-\log _{2q}\left( \small{N/N\prime} \right) \rfloor , \label{eq_positive_1} \\
		e&=\lceil \log _{2q}q\cdot d+\log _{2q}\left( \small{N/N\prime} \right) \rceil , \label{eq_positive_2} \\
		d&=\lceil \log _22q\cdot o+\log _2\left( \small{N/N\prime} \right) \rceil , \label{eq_positive_3} \\
		e&=\lceil \log _2q\cdot o+\log _2\left( \small{N/N\prime} \right) \rceil , \label{eq_positive_4} \\
		d&=\lfloor \log _q2q\cdot e-\log _q\left( \small{N/N\prime} \right) \rfloor , \label{eq_positive_5} \\
		o&=\lfloor \log _q2\cdot e-\log _q\left( \small{N/N\prime} \right) \rfloor . \label{eq_positive_6} 
	\end{align}
\end{theorem}

\begin{proof}
	By Theorem \ref{thm:LBrPI} and \textbf{GWRC.a}, we always have $1\le res_q\left( N,N\prime \right) <2$. Applying the logarithmic function, we obtain $0\le \log _2\frac{2^e}{q^o\cdot \small{\frac{N}{N\prime}}}<1$. That is,
	\begin{equation}
		0\le e-\log _2q\cdot o-\log _2\left( N/N\prime \right) <1. \label{eq_positive_7}
	\end{equation}
	Replacing $e$ with $d-o$ and $o$ with $d-e$, respectively, we obtain
	\begin{align}
		0&\le d-\log _22q\cdot o-\log _2\left( N/N\prime \right) <1, \label{eq_positive_8}
		\\
		0&\le \log _22q\cdot e-\log _2q\cdot d-\log _2\left( N/N\prime \right) <1. \label{eq_positive_9}
	\end{align}
	By transforming \eqref{eq_positive_7}, \eqref{eq_positive_8} and \eqref{eq_positive_9}, respectively, six inequalities are derived,
	\begin{align}
		\log _2q\cdot o+\log _2\left( N/N\prime \right) &\le e<\log _2q\cdot o+\log _2\left( N/N\prime \right) +1, \label{eq_positive_10}
		\\
		\frac{e-\log _2\left( N/N\prime \right)}{\log _2q}-\frac{1}{\log _2q}&<o\le \frac{e-\log _2\left( N/N\prime \right)}{\log _2q}, \label{eq_positive_11}
		\\
		\log _22q\cdot o+\log _2\left( N/N\prime \right) &\le d<\log _22q\cdot o+\log _2\left( N/N\prime \right) +1, \label{eq_positive_12}
		\\
		\frac{d-\log _2\left( N/N\prime \right)}{\log _22q}-\frac{1}{\log _22q}&<o\le \frac{d-\log _2\left( N/N\prime \right)}{\log _22q}, \label{eq_positive_13}
		\\
		\frac{\log _2q\cdot d+\log _2\left( N/N\prime \right)}{\log _22q}&\le e<\frac{\log _2q\cdot d+\log _2\left( N/N\prime \right)}{\log _22q}+\frac{1}{\log _22q}, \label{eq_positive_14}
		\\
		\frac{\log _22q\cdot e-\log _2\left( N/N\prime \right)}{\log _2q}-\frac{1}{\log _2q}&<d\le \frac{\log _22q\cdot e-\log _2\left( N/N\prime \right)}{\log _2q}. \label{eq_positive_15}
	\end{align}
	Clearly, each of these intervals has length less than 1 and contains exactly one integer. Therefore, \eqref{eq_positive_10} implies \eqref{eq_positive_4}, \eqref{eq_positive_11} implies \eqref{eq_positive_6}, \eqref{eq_positive_12} implies \eqref{eq_positive_3}, \eqref{eq_positive_13} implies \eqref{eq_positive_1}, \eqref{eq_positive_14} implies \eqref{eq_positive_2}, and \eqref{eq_positive_15} implies \eqref{eq_positive_5}.
\end{proof}

\begin{theorem}[doe Theorem for Negative Integers]
	For qx+1 problem, given a negative integer $N$, let $d\left( N \right) $, $o\left( N \right) $ and $e\left( N \right) $ denote the total number of iteration steps, the number of odd iteration steps, and the number of even iteration steps, respectively, in a non-repeating process from $N$ to $N\prime $. If \textbf{GWRC.b} holds, then we have
	\begin{align}	
		o&=\lceil \log _{2q}2\cdot d-\log _{2q}\left( N/N\prime \right) \rceil , \label{eq_negative_1} \\
		e&=\lfloor \log _{2q}q\cdot d+\log _{2q}\left( N/N\prime \right) \rfloor , \label{eq_negative_2} \\
		d&=\lfloor \log _22q\cdot o+\log _2\left( N/N\prime \right) \rfloor , \label{eq_negative_3} \\
		e&=\lfloor \log _2q\cdot o+\log _2\left( N/N\prime \right) \rfloor , \label{eq_negative_4} \\
		d&=\lceil \log _q2q\cdot e-\log _q\left( N/N\prime \right) \rceil , \label{eq_negative_5} \\
		o&=\lceil \log _q2\cdot e-\log _q\left( N/N\prime \right) \rceil . \label{eq_negative_6} 
	\end{align}
\end{theorem}

\begin{proof}
	By Theorem \ref{thm:LBrNI} and \textbf{GWRC.b}, we always have $\frac{1}{2}<res_q\left( N,N\prime \right) \le 1$. Applying the logarithmic function, we obtain $-1<\log _2\frac{2^e}{q^o\cdot \small{\frac{N}{N\prime}}}\le 0$. That is,
	\begin{equation}
		-1<e-\log _2q\cdot o-\log _2\left( N/N\prime \right) \le 0. \label{eq_negative_7}
	\end{equation}
	Replacing $e$ with $d-o$ and $o$ with $d-e$, respectively, we obtain
	\begin{align}
		-1&<d-\log _22q\cdot o-\log _2\left( N/N\prime \right) \le 0, \label{eq_negative_8}
		\\
		-1&<\log _22q\cdot e-\log _2q\cdot d-\log _2\left( N/N\prime \right) \le 0. \label{eq_negative_9}
	\end{align}
	By transforming \eqref{eq_negative_7}, \eqref{eq_negative_8} and \eqref{eq_negative_9}, respectively, six inequalities are derived,
	\begin{align}
		\log _2q\cdot o+\log _2\left( N/N\prime \right) -1&<e\le \log _2q\cdot o+\log _2\left( N/N\prime \right) , \label{eq_negative_10}
		\\
		\frac{e-\log _2\left( N/N\prime \right)}{\log _2q}&\le o<\frac{e-\log _2\left( N/N\prime \right)}{\log _2q}+\frac{1}{\log _2q}, \label{eq_negative_11}
		\\
		\log _22q\cdot o+\log _2\left( N/N\prime \right) -1&<d\le \log _22q\cdot o+\log _2\left( N/N\prime \right) , \label{eq_negative_12}
		\\
		\frac{d-\log _2\left( N/N\prime \right)}{\log _22q}&\le o<\frac{d-\log _2\left( N/N\prime \right)}{\log _22q}+\frac{1}{\log _22q}, \label{eq_negative_13}
		\\
		\frac{\log _2q\cdot d+\log _2\left( N/N\prime \right)}{\log _22q}-\frac{1}{\log _22q}&<e\le \frac{\log _2q\cdot d+\log _2\left( N/N\prime \right)}{\log _22q}, \label{eq_negative_14}
		\\
		\frac{\log _22q\cdot e-\log _2\left( N/N\prime \right)}{\log _2q}&\le d<\frac{\log _22q\cdot e-\log _2\left( N/N\prime \right)}{\log _2q}+\frac{1}{\log _2q}. \label{eq_negative_15}		
	\end{align}
	Each of these intervals has length less than 1 and therefore contains exactly one integer. Therefore, \eqref{eq_negative_10} implies \eqref{eq_negative_4}, \eqref{eq_negative_11} implies \eqref{eq_negative_6}, \eqref{eq_negative_12} implies \eqref{eq_negative_3}, \eqref{eq_negative_13} implies \eqref{eq_negative_1}, \eqref{eq_negative_14} implies \eqref{eq_negative_2}, and \eqref{eq_negative_15} implies \eqref{eq_negative_5}.
\end{proof}


\section*{Acknowledgements}
We thank Nan Ou, Zihan Xiang, Eric Roosendaal, Mohamed Lhachimi, and the anonymous reviewers. In particular, one reviewer suggested extending the concept of residue to the qx+1 problem. Inspired by this suggestion, we define and study $Res_q\left( N \right) $, $res_q\left( N,N\prime \right) $, through which several interesting properties are obtained.


\medskip
\noindent MSC2020: 11B83



\begin{thebibliography}{00}

	\bibitem{Andaloro2000}
	Andaloro, P. (2000).
	On total stopping times under 3x + 1 iteration.
	\emph{Fibonacci Quart.} 38(1): 73--78.
	\href{https://doi.org/10.1080/00150517.2000.12428829}{doi.org/10.1080/00150517.2000.12428829}	

	\bibitem{Andrei2000}
	Andrei, S., Kudlek, M., Niculescu, R. S. (2000).
	Some results on the Collatz problem.
	\emph{Acta Inform.} 37(2): 145--160.
	\href{https://doi.org/10.1007/s002360000039}{doi.org/10.1007/s002360000039}
	
	\bibitem{Goodwin2015}
	Goodwin, J. R. (2015).
	The 3x+1 problem and integer representations. 
	\href{https://arxiv.org/abs/1504.03040}{arXiv:1504.03040}.

	\bibitem{Kontorovich2009}
	Kontorovich, A. V., Lagarias, J. C. (2009).
	Stochastic models for the 3x+1 and 5x+1 problems. 
	\href{https://arxiv.org/abs/0910.1944}{arXiv:0910.1944}.

	\bibitem{Ladue2018}
	Ladue, M. D. (2018).
	Clusters of integers with equal total stopping times in the 3x + 1 problem.
	\emph{Fibonacci Quart.} 56(2): 156--162.
	\href{https://doi.org/10.1080/00150517.2018.12427710}{doi.org/10.1080/00150517.2018.12427710}	
	
	\bibitem{Lagarias1985}
	Lagarias, J. C. (1985).
	The 3x+1 problem and its generalizations.
	\emph{Amer. Math. Monthly} 92(1): 3--23.
	\href{https://doi.org/10.1080/00029890.1985.11971528}{doi.org/10.1080/00029890.1985.11971528}	
	
	\bibitem{Lagarias2012}
	Lagarias, J. C. (2012).
	The 3x+1 problem: an annotated bibliography, II (2000-2009).
	\href{https://arxiv.org/abs/math/0608208}{arXiv:math/0608208}.
	
	\bibitem{Leavens1992}
	Leavens, G. T., Vermeulen, M. (1992).
	3x+1 search programs.
	\emph{Comput. Math. Appl.} 24(11): 79--99.
	\href{https://doi.org/10.1016/0898-1221(92)90034-F}{doi.org/10.1016/0898-1221(92)90034-F}
	
	\bibitem{Roosendaal2025}
	Roosendaal, E. (2025).
	On the 3x + 1 problem. 
	\href{https://www.ericr.nl/wondrous/}{www.ericr.nl/wondrous/}.
	
	\bibitem{Ruggiero2019}
	Ruggiero, R. (2019).
	The relationship between stopping time and number of odd terms in Collatz sequences. 
	\href{https://arxiv.org/abs/1911.01229}{arXiv:1911.01229}.
	
	\bibitem{Tao2022}
	Tao, T. (2022).
	Almost all orbits of the Collatz map attain almost bounded values.
	\emph{Forum Math. Pi} 10: e12 1--56.
	\href{https://doi.org/10.1017/fmp.2022.8}{doi.org/10.1017/fmp.2022.8}	

\end{thebibliography}
\end{document}